\documentclass[12pt]{article}
\usepackage[margin=1in]{geometry}
\usepackage{amsmath, amssymb, amsthm}
\usepackage{hyperref}

\theoremstyle{plain}
\newtheorem{theorem}{Theorem}[section]
\newtheorem{lemma}[theorem]{Lemma}
\newtheorem{proposition}[theorem]{Proposition}
\theoremstyle{definition}
\newtheorem{definition}[theorem]{Definition}
\newtheorem{remark}[theorem]{Remark}

\newcommand{\dvr}{d}
\DeclareMathOperator{\vp}{v}

\title{A Proof of Bala's Congruence Conjecture for A028342}
\author{Ahaan Kallat\\
Khoury College of Computer Sciences\\
Northeastern University\\
Boston, MA, USA\\
\texttt{kallat.a@northeastern.edu}}
\date{\today}

\begin{document}
\maketitle

\begin{abstract}
Let $a(n)$ be the sequence A028342 in the On-Line Encyclopedia of Integer
Sequences (OEIS), defined by the exponential generating function
$\sum_{n\ge0} a(n)x^n/n! = \prod_{i\ge1}(1-x^i)^{-1/i}$. Equivalently, $a(n)$
counts permutations of an $n$-element labeled set in which every cycle is
assigned one divisor of its length, where a cycle of length $m$ has
$\dvr(m)$ choices, $\dvr(m)$ being the number of positive divisors of $m$. We prove
a family of congruences for $a$, conjectured by Peter Bala. They state that
$k \mid a(n+k)+a(n)$ for odd $k$, that $k \mid a(n+k)-a(n)$ for
$k\equiv 0,2,6 \pmod 8$, and that $k \mid 2\bigl(a(n+k)-a(n)\bigr)$ for
$k\equiv 4\pmod 8$. The proof first establishes a product congruence
$a(n+k)\equiv a(n)a(k)\pmod k$, and then computes $a(p^r)\bmod p^r$ for each
prime power by counting the colored permutations fixed by a subgroup of order
$p$.
\end{abstract}

\noindent\textbf{Keywords:} integer sequences, congruences, colored permutations,
group actions, divisor function.

\noindent\textbf{2020 Mathematics Subject Classification:} 05A15, 05A05, 11A07,
11B50.

\section{Introduction}

The On-Line Encyclopedia of Integer Sequences~\cite{oeis} records many
empirical observations contributed by its users and labeled ``Conjecture.''
Some of these are congruences guessed from numerical data but left unproved.
This note gives a proof of one such conjecture, attributed to Peter Bala, for the
sequence A028342~\cite{oeisA028342}.

The sequence $a(n)$ is defined by the exponential generating function
\begin{equation}\label{eq:egf}
\sum_{n\ge0} a(n)\,\frac{x^n}{n!}
  = \prod_{i\ge1}(1-x^i)^{-1/i}
  = \exp\!\left(\sum_{m\ge1}\frac{\dvr(m)\,x^m}{m}\right),
\end{equation}
where $\dvr(m)$ is the number of positive divisors of $m$. All generating-function
identities in this note, including~\eqref{eq:egf} and~\eqref{eq:combdef} below, are
identities of formal power series in $\mathbb{Q}[[x]]$: the infinite product and the
exponentials are defined termwise in the usual way (each coefficient of $x^n$ receives
contributions from only finitely many factors), and no question of analytic convergence
arises. The second equality in~\eqref{eq:egf}
follows by taking logarithms. Using $-\log(1-x^i) = \sum_{j=1}^{\infty} x^{ij}/j$,
\[
\log\prod_{i\ge1}(1-x^i)^{-1/i}
  = \sum_{i=1}^{\infty}\frac1i\sum_{j=1}^{\infty}\frac{x^{ij}}{j}
  = \sum_{i=1}^{\infty}\sum_{j=1}^{\infty}\frac{x^{ij}}{ij},
\]
and collecting the terms with $ij=m$ gives the coefficient
\[
\sum_{\substack{i,j\ge1\\ ij=m}}\frac{1}{ij}
  = \frac1m\sum_{\substack{i,j\ge1\\ ij=m}}1
  = \frac{\dvr(m)}{m},
\]
which is the exponent of $x^m$ in~\eqref{eq:egf}.

By the exponential formula, $a(n)$ has the following combinatorial description,
which was recorded for A028342 by Ricardo Gómez Aíza~\cite{gomezAiza}.
See also the fuller treatment in Section~3.3.1 of~\cite{ahmadi2024}.
A \emph{colored permutation} is a pair $(\sigma, c)$, where $\sigma$ is a
permutation of an $n$-element labeled set and $c$ assigns to each cycle of
$\sigma$ one divisor of that cycle's length. The choices are made independently
across cycles, so a cycle of length $m$ has $\dvr(m)$ possible colors. For
instance, a cycle of length $4$ has three possible colors, corresponding to the
divisors $1, 2, 4$. Then $a(n)$ is the number of colored permutations, which
equals
\begin{equation}\label{eq:combdef}
a(n) = \sum_{\sigma}\ \prod_{\text{cycles } C \text{ of } \sigma}
       \dvr(|C|).
\end{equation}
The product over the cycles of a single $\sigma$ counts the colorings of that
permutation, and the sum ranges over all $n!$ permutations. The first values are
$a(0),a(1),\dots = 1, 1, 3, 11, 59, 339, 2629, \dots$.

The same sequence also appears in the unified framework of Ahmadi, Gómez-Aíza,
and Ward~\cite{ahmadi2024}, who study divisor-controlled families of colored
combinatorial objects and prove, among other results, the asymptotic behavior
of A028342. The present note addresses a different question: it proves Bala's
congruence conjecture for A028342, rather than an asymptotic formula.

We prove the following.

\begin{theorem}\label{thm:main}
For all $n\ge0$ and $k\ge1$,
\[
\begin{cases}
k\mid a(n+k)+a(n), & k\text{ odd},\\[1mm]
k\mid a(n+k)-a(n), & k\equiv0,2,6\pmod 8,\\[1mm]
k\mid 2\bigl(a(n+k)-a(n)\bigr), & k\equiv4\pmod 8.
\end{cases}
\]
\end{theorem}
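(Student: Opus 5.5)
We follow the route announced in the abstract: first a product congruence $a(n+k)\equiv a(n)a(k)\pmod k$, then the value of $a(p^r)\bmod p^r$ for each prime power, and finally a CRT assembly.

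\textbf{Step 1 (product congruence).} By the Chinese remainder theorem it suffices to prove $a(n+k)\equiv a(n)a(k)\pmod{p^r}$ whenever $p^r\mid k$. We will instead prove the stronger statement $a(n+p^r)\equiv a(n)a(p^r)\pmod{p^r}$ and iterate it. Iterating requires that $a(jp^r)\equiv a(p^r)^j$ follows from the same relation, and it does, since $a(n+k)\equiv a(n)a(p^r)^{k/p^r}$. The case $k=0$-style base case is consistent because $a(0)=1$.

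To prove the stronger statement, we exponentiate a logarithmic derivative. Write $A(x)=\sum a(n)x^n/n!$, so $A'=A\cdot D$ with $D(x)=\sum_{m\ge1}\dvr(m)x^{m-1}$. This gives the recurrence
\[
a(n+1)=\sum_{m=1}^{n+1}\frac{n!}{(n+1-m)!}\,\dvr(m)\,a(n+1-m).
\]
A recurrence of this shape with integer weights $w_m=\dvr(m)$ is exactly the setting of the classical Touchard-type congruences, proved by a group action. Let $\mathbb{Z}/p^r$ act on colored permutations of $[n+p^r]$ by conjugation with a fixed $p^r$-cycle $\tau$ on the last $p^r$ points. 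Orbits have size a power of $p$, so modulo $p$ we may count fixed points. Counting fixed points gives $a(n+p^r)\equiv\sum(\text{fixed structures})$ only modulo $p$, not $p^r$.

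To lift to modulus $p^r$, we will instead use the standard lemma that for any EGF $\exp(\sum b_m x^m/m)$ with integer $b_m$, one has $a(n+p)\equiv a(n)a(p)$-type congruences. A more robust route is the following. The sequence $\sum_\sigma\prod w(|C|)$ satisfies $a(n+k)\equiv a(n)a(k)\pmod k$ whenever $a$ is a ``Gessel-type'' sequence. The proof conditions on the block containing the last $k$ points and uses that $\binom{n+k}{j}$-weighted contributions vanish mod $k$. We expect this step to be the main technical obstacle. The first thing to try is induction on $n$ via the recurrence, using that the coefficient $\frac{n!}{(n+1-m)!}$ is a falling factorial, which is divisible by $p^r$ on appropriate ranges.

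\textbf{Step 2 (prime powers).} We compute $a(p^r)\bmod p^r$ by letting $H=\langle\tau^{p^{r-1}}\rangle$ of order $p$ act on colored permutations of $[p^r]$ by conjugation, where $\tau$ is a $p^r$-cycle. All non-fixed orbits have size $p$, which handles the modulus $p$ only. To reach $p^r$ we use the full cyclic group and Burnside-style bookkeeping: orbits of size $p^s$ contribute multiples of $p^s$, and fixed objects under subgroups are colored permutations commuting with $\tau^{p^{r-s}}$. This should give $a(p^r)\equiv\dvr(p^r)\cdot(\text{cycle }\tau\text{ contributions})+\ldots$. The expected outcome, which we will check against the data $a(3)=11\equiv-1$, $a(5)=339\equiv-1$, $a(9)$, is
\begin{itemize}
\item $a(p^r)\equiv-1\pmod{p^r}$ for odd $p$;
\item $a(2)\equiv1\pmod 2$, $a(4)=59\equiv3\pmod 4$, and $a(2^r)\equiv1\pmod{2^r}$ for $r\ge3$;
\end{itemize}
up to the factor-2 defect at $4$.

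\textbf{Step 3 (assembly).} Apply CRT to $k=2^e\prod p_i^{r_i}$. For odd $k$, every prime power factor gives $a(k)\equiv\prod(-1)^{k/p^r}=-1$, using that $k/p^r$ is odd. For $e\ge3$ and for $e=1$, the odd parts have even cofactors and contribute $+1$, and the $2$-part contributes $1$. Hence $a(k)\equiv1$, which covers $k\equiv0,2,6\pmod 8$. For $e=2$, $a(k)\equiv1\pmod{k/4}$ and $a(k)\equiv 3^{\text{odd}}\equiv-1\pmod 4$, so $2(a(k)-1)\equiv0\pmod k$. Combining this with Step 1 gives Theorem~\ref{thm:main}.
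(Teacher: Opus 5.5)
Your skeleton (product congruence, then $a(p^r)\bmod p^r$, then CRT) matches the paper's, and your Step~3 assembly is correct. However, Steps~1 and~2, which carry all the content, are not proved. You call Step~1 ``the main technical obstacle'' and leave it to an unnamed ``Gessel-type'' lemma or an unexecuted induction. Step~2 ends with an ``expected outcome'' checked on a few values.

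In Step~1 you already have the right action (rotating the last $k$ labels and fixing the first $n$), but you draw the wrong conclusion from it. You do not need fixed points, and you do not need to pass to prime powers. Split the colored permutations of the $n+k$ labels into two classes:
\begin{itemize}
\item Unmixed ones, where each cycle lies in the first $n$ labels or in the last $k$. There are exactly $a(n)a(k)$ of them, with no orbit analysis needed.
\item Mixed ones. Every mixed object has \emph{trivial} stabilizer. If a rotation $g$ fixes it, then $g\sigma=\sigma g$. Choose $u$ among the first $n$ labels with $\sigma(u)$ among the last $k$. Then $g(\sigma(u))=\sigma(g(u))=\sigma(u)$, which is impossible for $g\ne1$ because the rotations act freely on the last $k$ labels.
\end{itemize}
So mixed objects come in orbits of size exactly $k$, and $a(n+k)\equiv a(n)a(k)\pmod k$ follows at once for every $k$. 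Your worry that orbit counting only works modulo $p$ disappears.

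In Step~2, the bookkeeping ``orbits of size $p^s$ contribute multiples of $p^s$'' gives nothing modulo $p^r$ by itself. The missing fact is that every nontrivial subgroup of $C_{p^r}$ contains the order-$p$ subgroup $H$. Hence any object not fixed by $H$ has a free $C_{p^r}$-orbit, and $a(p^r)\equiv|X^H|\pmod{p^r}$, where $X^H$ is the set of $H$-fixed colored permutations.

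After that, $|X^H|$ still has to be computed:
\begin{itemize}
\item An $H$-fixed object induces a permutation of the $M=p^{r-1}$ blocks. A block-cycle of length $\ell$ has weight $W_\ell=p^{\ell-1}\bigl(\dvr(\ell)+(p-1)\dvr(p\ell)\bigr)$, so $|X^H|=M!\,[x^M]\exp\bigl(\sum_\ell W_\ell x^\ell/\ell\bigr)$.
\item For odd $p$, a $p$-adic estimate kills every term containing a block-cycle of length at least $2$. The factor $M$ contributes $r-1$, the weights of the $s$ such block-cycles contribute at least $s$, and the denominators cost at most $\vp_p(s!)\le s-1$. Only $W_1^M=(2p-1)^{p^{r-1}}\equiv-1$ remains.
\item For $p=2$, the length-$2$ block-cycles survive because $W_2/2=5$ is odd. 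One must evaluate $\sum_j\frac{M!}{j!(M-2j)!}5^j3^{M-2j}$. The $j=1$ and $j=2$ terms each have $2$-adic valuation exactly $r-1$ and cancel modulo $2^r$. This is where $r\ge3$ is used, and why $a(4)\equiv3\pmod4$ is exceptional.
\end{itemize}

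Without these arguments, the residues you list (which are correct) remain conjectural, and the theorem does not follow.
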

The three cases are exhaustive, since every $k\ge1$ is odd, congruent to
$4\pmod 8$, or congruent to $0$, $2$, or $6\pmod 8$.

The proof has one main idea. A group action lets us discard the objects lying in
orbits whose size is exactly the modulus, since each such orbit contributes $0$
modulo that modulus. We use this idea twice. In Section~\ref{sec:product} the
mixed objects fall into $C_k$-orbits of size $k$, so modulo $k$ only the unmixed
count $a(n)a(k)$ remains. In Section~\ref{sec:primepower} the objects with
$C_{p^r}$-orbits of size $p^r$ vanish modulo $p^r$, leaving a count of fixed
points. Between these two steps, Section~\ref{sec:reduction} uses the first
congruence to reduce $a(k)\bmod k$ to the prime-power values $a(p^r)\bmod p^r$ by
the Chinese remainder theorem, and Section~\ref{sec:completing} combines those
values into the three cases of Theorem~\ref{thm:main}.

We have checked Theorem~\ref{thm:main} and the intermediate identities directly
for all $k$ up to $40$ and all $n$ up to $30$.\footnote{The values of $a(n)$
were generated from
\[
a(n)=\sum_{m=1}^{n}\binom{n-1}{m-1}(m-1)!\,\dvr(m)\,a(n-m),
\qquad a(0)=1,
\]
which follows by differentiating~\eqref{eq:egf}.}

\section{A product congruence}\label{sec:product}

Throughout this section $A$ and $B$ are disjoint label sets with $|A|=n$ and
$|B|=k$.

\begin{definition}\label{def:objects}
An \emph{object} is a colored permutation of $A\cup B$. That is, it is a
permutation of $A\cup B$ together with a choice of one divisor of the length of
each of its cycles. An object is \emph{unmixed} if every cycle lies entirely in
$A$ or entirely in $B$, and \emph{mixed} if some cycle contains both an
$A$-label and a $B$-label. Let $X$ denote the set of all objects, and let
$X_{\mathrm{unmixed}}$ and $X_{\mathrm{mixed}}$ denote the two classes. Then
$X = X_{\mathrm{unmixed}} \sqcup X_{\mathrm{mixed}}$ and $|X| = a(n+k)$.
\end{definition}

We use two standard notions from group actions. When a finite group $G$ acts on
a set, the \emph{orbit} of a point is the set of points reachable from it by
applying elements of $G$, and the \emph{stabilizer} of a point is the subgroup
of elements that fix it. The orbit--stabilizer theorem states that
\begin{equation}\label{eq:orbstab}
|\text{orbit of }x|\cdot|\text{stabilizer of }x| = |G|.
\end{equation}
In particular, a point with trivial stabilizer has an orbit of size $|G|$. We
use~\eqref{eq:orbstab} here and again in Section~\ref{sec:primepower}.

\begin{proposition}\label{prop:product}
For all $n\ge0$ and $k\ge1$,
\[
a(n+k)\equiv a(n)\,a(k)\pmod{k}.
\]
\end{proposition}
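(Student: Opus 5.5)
We let the cyclic group $C_k=\langle\tau\rangle$ act on $B$ by a fixed $k$-cycle $\tau$ and trivially on $A$, and hence on $A\cup B$. It then acts on objects by conjugation: $g\cdot(\sigma,c)=(g\sigma g^{-1},c')$, where $c'$ gives the cycle $g(C)$ of $g\sigma g^{-1}$ the color that $c$ gave to $C$. Conjugation preserves cycle lengths, so $c'$ is again a valid coloring and this is an action on $X$. The property of being mixed is invariant under this action, because $g$ maps $A$ to $A$ and $B$ to $B$. So $X_{\mathrm{mixed}}$ and $X_{\mathrm{unmixed}}$ are both unions of orbits. The unmixed objects are pairs consisting of a colored permutation of $A$ and a colored permutation of $B$, which gives $|X_{\mathrm{unmixed}}|=a(n)a(k)$. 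Since $|X|=a(n+k)$, it suffices to show $k\mid |X_{\mathrm{mixed}}|$. By \eqref{eq:orbstab}, this follows once we know every mixed object has trivial stabilizer, since then each orbit in $X_{\mathrm{mixed}}$ has size exactly $k$.

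The key step is therefore the following claim: if $g=\tau^j\neq\mathrm{id}$ fixes a mixed object $(\sigma,c)$, we get a contradiction. If $g$ fixes the object then in particular $g\sigma g^{-1}=\sigma$, so $g$ commutes with $\sigma$. Take a mixed cycle $C$ of $\sigma$ containing some $x\in A$ and some $y\in B$. Because $g$ commutes with $\sigma$, it maps cycles of $\sigma$ to cycles of $\sigma$. Since $g(x)=x\in C$, we get $g(C)=C$. On $C$, $g$ commutes with the cyclic permutation $\sigma|_C$, and the centralizer of a full cycle acting on its own support consists of its powers, so $g|_C=\sigma^t|_C$ for some $t$. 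Now $g$ fixes $x$, so $\sigma^t(x)=x$, which forces $\sigma^t|_C=\mathrm{id}$. Hence $g$ fixes $y\in B$. But $\tau^j$ with $\tau^j\ne\mathrm{id}$ is a power of a $k$-cycle, so it has no fixed points on $B$. This is the contradiction, and the stabilizer is trivial. Note that we do not even need the color condition here; the condition $g\sigma g^{-1}=\sigma$ alone suffices.

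The main point needing care is checking that the conjugation action really is a group action on colored permutations and that it preserves mixedness. Both are routine. The centralizer fact used above is also standard: if $g$ commutes with $\sigma$ and fixes one point of a $\sigma$-cycle, then it fixes every point of that cycle, because $g(\sigma^i x)=\sigma^i g(x)=\sigma^i x$. This direct form avoids citing the centralizer structure at all. Putting the pieces together, $a(n+k)=a(n)a(k)+|X_{\mathrm{mixed}}|\equiv a(n)a(k)\pmod k$.
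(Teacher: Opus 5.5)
Your proof is correct and takes essentially the same approach as the paper: the same $C_k$-action relabeling $B$, the exact count $a(n)a(k)$ for unmixed objects, and trivial stabilizers on mixed objects because $g\sigma=\sigma g$ with $g$ fixing an $A$-label forces $g$ to fix a $B$-label. The only cosmetic difference is that the paper picks $u\in A$ with $\sigma(u)\in B$ and concludes $g(\sigma(u))=\sigma(u)$ in one step, whereas you propagate $g(\sigma^i x)=\sigma^i x$ around the whole mixed cycle.
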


\begin{proof}
Let the cyclic group $C_k$ of order $k$ act on $X$ by cyclically rotating the
labels of $B$ among themselves and fixing every label of $A$. The color of each
cycle is carried along with the cycle under this relabeling, so the action
sends colored permutations to colored permutations. This action also preserves
the unmixed and mixed classes, since rotating $B$ neither creates nor destroys a
cycle that contains both an $A$-label and a $B$-label. We count $|X| = a(n+k)$
modulo $k$ by treating the two classes of Definition~\ref{def:objects}
separately.

\textit{Unmixed objects}: An unmixed object is a pair consisting of a colored
permutation of $A$ and a colored permutation of $B$, chosen independently.
Therefore
\[
|X_{\mathrm{unmixed}}| = a(n)\,a(k).
\]
This is an exact count, and it is all we use about the unmixed class.

\textit{Mixed objects}: We claim that every mixed object has trivial
$C_k$-stabilizer. By~\eqref{eq:orbstab} its orbit then has size $|C_k| = k$, so
$X_{\mathrm{mixed}}$ is a union of orbits each of size $k$, and hence
$|X_{\mathrm{mixed}}|$ is a multiple of $k$. To prove the claim, suppose some
nontrivial rotation $g\in C_k$ fixes a mixed object, and write $\sigma$ for its
underlying permutation, so that
$g\sigma g^{-1}=\sigma$, which is the same as $g\sigma=\sigma g$. Since $\sigma$
is mixed, some cycle contains both $A$-labels and $B$-labels. Traversing that
cycle, we may choose an $A$-label $u$ whose image $\sigma(u)$ is a $B$-label,
because the cycle must make at least one transition from an $A$-label to a
$B$-label. Since $g$ fixes every $A$-label, $g(u)=u$, and therefore
\[
g\bigl(\sigma(u)\bigr) = (g\sigma)(u) = (\sigma g)(u) = \sigma\bigl(g(u)\bigr)
  = \sigma(u).
\]
This says that the $B$-label $\sigma(u)$ is fixed by the nontrivial rotation
$g$ of $B$. But $C_k$ acts freely on $B$, so no nontrivial rotation can fix a
$B$-label. This contradiction shows that no nontrivial element of $C_k$ fixes a
mixed object. Hence the stabilizer is trivial, which proves the claim.

\textit{Conclusion}: The mixed objects contribute $0\pmod k$, and the unmixed
objects contribute $a(n)a(k)$. Since $|X|=a(n+k)$,
\[
a(n+k)\equiv a(n)\,a(k)\pmod{k}.\qedhere
\]
\end{proof}

\section{Reduction to prime powers}\label{sec:reduction}

The product congruence turns a relation between far-apart terms into a
multiplicative one, so the value of $a(k)\bmod k$ is now the only unknown. The
next lemma expresses that value through prime-power data.

\begin{lemma}\label{lem:reduction}
Let $q\mid k$ and write $k=sq$. Then
\[
a(k)\equiv a(q)^{\,k/q}\pmod q.
\]
\end{lemma}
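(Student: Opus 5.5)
We prove the statement by induction on $s=k/q$, using Proposition~\ref{prop:product} with the modulus $q$ and not $k$. Proposition~\ref{prop:product} holds for every $n\ge0$ and every modulus $k\ge1$, so we may take its modulus to be $q$. With $n=(j-1)q$ it gives
\[
a(jq)=a\bigl((j-1)q+q\bigr)\equiv a\bigl((j-1)q\bigr)\,a(q)\pmod q
\]
for every $j\ge1$.

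The induction runs as follows. The base case $j=1$ reads $a(q)\equiv a(0)a(q)=a(q)$, which is trivial since $a(0)=1$. Equivalently, we could start at $j=0$, where $a(0)=1=a(q)^0$. For the inductive step, assume $a\bigl((j-1)q\bigr)\equiv a(q)^{j-1}\pmod q$. The displayed congruence then gives $a(jq)\equiv a(q)^{j}\pmod q$. Taking $j=s=k/q$ yields $a(k)\equiv a(q)^{k/q}\pmod q$, which is the claim.

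There is no real obstacle here. The only point to check is that Proposition~\ref{prop:product} is applied with modulus $q$, so that each step reduces modulo the same $q$. The product congruence in modulus $k$ would not be enough on its own. It is also worth noting why this form matters later: taking $q=p^r$ to be the exact prime-power divisor of $k$, the lemma expresses $a(k)\bmod p^r$ through $a(p^r)\bmod p^r$. This is what the Chinese remainder theorem step in this section needs.
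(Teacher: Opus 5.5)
Your proof is correct and is essentially the paper's argument: apply Proposition~\ref{prop:product} with modulus $q$ at $n=q,2q,\dots,(s-1)q$ and chain the resulting congruences to get $a(sq)\equiv a(q)^{s}\pmod q$. Your explicit induction, with base case $a(0)=1=a(q)^0$, just makes the paper's step-by-step chain formal.
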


\begin{proof}
Proposition~\ref{prop:product}, applied with $q$ in place of $k$, gives
$a(m+q)\equiv a(m)\,a(q)\pmod q$ for every $m\ge0$. Taking $m=q, 2q, 3q, \dots$
in turn and using this relation at each step,
\begin{align*}
a(2q) &\equiv a(q)\,a(q) = a(q)^2 \pmod q,\\
a(3q) &\equiv a(2q)\,a(q) \equiv a(q)^3 \pmod q,\\
       &\ \ \vdots\\
a(sq) &\equiv a((s-1)q)\,a(q) \equiv a(q)^{s} \pmod q.
\end{align*}
Since $sq=k$, the exponent equals $s=k/q$, which gives
$a(k)\equiv a(q)^{\,k/q}\pmod q$.
\end{proof}

Now fix $k$ and let $q_1, \dots, q_t$ be the maximal prime-power divisors of
$k$, where $q_i = p_i^{r_i}$ for distinct primes $p_i$. These are pairwise
coprime and their product is $k$. The Chinese remainder theorem states that an
integer is determined modulo $k$ by its residues modulo each $q_i$. By
Lemma~\ref{lem:reduction}, the residue of $a(k)$ modulo $q_i$ is determined by
$a(q_i)\bmod q_i$. It therefore suffices to determine $a(p^r)\bmod p^r$ for each
prime power $p^r$, which is the goal of the next section.

\section{Prime-power residues}\label{sec:primepower}

Throughout this section $\vp_p$ denotes the $p$-adic valuation on $\mathbb{Q}^{\times}$: for a
nonzero integer $x$, $\vp_p(x)$ is the exponent of the highest power of $p$ dividing $x$, and this
extends to nonzero rationals by $\vp_p(a/b) = \vp_p(a) - \vp_p(b)$ for nonzero integers $a,b$
(well defined since $\vp_p(a c)=\vp_p(a)+\vp_p(c)$ for the integers this note uses). We apply
$\vp_p$ below to a few explicit rationals, such as $W_\ell/\ell$.

\begin{proposition}\label{prop:primepower}
For an odd prime $p$ and $r\ge1$,
\[
a(p^r)\equiv -1\pmod{p^r}.
\]
For $p=2$,
\[
a(2)\equiv 1\pmod 2,\qquad
a(4)\equiv 3\pmod 4,\qquad
a(2^r)\equiv 1\pmod{2^r}\ \ (r\ge3).
\]
\end{proposition}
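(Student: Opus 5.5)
The plan is to run the orbit-counting idea of Proposition~\ref{prop:product} a second time, now with the label set $\mathbb{Z}/p^r$ and the cyclic group $C_{p^r}$ generated by the rotation $\tau\colon i\mapsto i+1$. This group acts on colored permutations of size $p^r$ by conjugation, with colors carried along. Every orbit has size a power of $p$. An orbit has size exactly $p^r$ unless its stabilizer is nontrivial. A nontrivial subgroup of a cyclic $p$-group contains the unique subgroup $H$ of order $p$, generated by $\rho=\tau^{p^{r-1}}$. So the objects in orbits of size less than $p^r$ are exactly the colored permutations fixed by $H$, and
\[
a(p^r)\equiv |\mathrm{Fix}(H)|\pmod{p^r}.
\]
The rest of the proof is a computation of $|\mathrm{Fix}(H)|$.

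Put $\ell=p^{r-1}$, and note that $\rho$ is a product of $\ell$ disjoint $p$-cycles (its orbits). A permutation $\sigma$ commuting with $\rho$ permutes these orbits, so it induces some $\pi\in S_\ell$. Over a $j$-cycle of $\pi$ there are $p^j$ lifts, indexed by a "twist" in $C_p$ attached to each step of the cycle.
\begin{itemize}
\item When the total twist is $0$ (there are $p^{j-1}$ such lifts), the lift splits into $p$ cycles of length $j$. These are permuted transitively by $\rho$, so an $H$-invariant coloring gives them a common color, and there are $\dvr(j)$ choices.
\item When the total twist is nonzero (there are $(p-1)p^{j-1}$ such lifts), the lift is a single cycle of length $pj$. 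Its color is automatically $\rho$-invariant, and there are $\dvr(pj)$ choices.
\end{itemize}
So each $j$-cycle of $\pi$ carries the weight
\[
W_j=p^{j-1}\bigl(\dvr(j)+(p-1)\dvr(pj)\bigr).
\]
By the exponential formula this gives
\[
|\mathrm{Fix}(H)|=\sum_{\pi\in S_\ell}\prod_{\text{cycles of }\pi}W_{|C|}
=\ell!\,[x^\ell]\exp\Bigl(\sum_{j\ge1}\frac{W_j}{j}x^j\Bigr).
\]
I will double-check this lift count against the centralizer order $p^\ell\ell!$.

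For $r=1$ we have $\ell=1$ and $|\mathrm{Fix}(H)|=W_1=1+2(p-1)=2p-1$, which is $-1 \pmod p$ for odd $p$ and $1 \pmod 2$ for $p=2$. This settles the base cases. For $r\ge2$ we need the weighted sum modulo $p^r=p\ell$. The product of the $p^{j-1}$ factors over the cycles of $\pi$ is $p^{\ell-c(\pi)}$, where $c(\pi)$ is the number of cycles of $\pi$. Hence only $\pi$ with few non-fixed points can survive, but these appear with combinatorial multiplicities that may themselves be divisible by powers of $p$.

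The cleanest route I would try first is to repeat the orbit trick on $S_\ell$: let $C_\ell$ (with $\ell=p^{r-1}$) act on the index set of orbits, and reduce $|\mathrm{Fix}(H)|$ modulo $p^{r-1}$, or $p^r$ after tracking the extra factor $p$, to a fixed-point count. Alternatively, I would compute the $p$-adic valuations $\vp_p(W_j/j)$ directly and show that every term of the exponential other than the pure identity term $W_1^\ell=(2p-1)^{\ell}$ and a few low-order corrections vanishes modulo $p^r$.

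For odd $p$, the identity term gives $(2p-1)^{p^{r-1}}\equiv -1\pmod{p^r}$, since $(-1+2p)^{p^{r-1}}\equiv -1 \pmod{p^r}$ by the binomial theorem. The goal is then to show that the remaining contributions cancel or vanish modulo $p^r$. For $p=2$ the divisor values $\dvr(2^j)=j+1$ enter through $W_j$ and produce small-$r$ anomalies. I would verify $a(4)=59\equiv 3\pmod 4$ directly, and for $r\ge3$ show that enough factors of $2$ accumulate to give $1\pmod{2^r}$.

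I expect this last step, controlling the $p$-adic valuations of the non-identity terms in the exponential formula (especially for $p=2$), to be the main obstacle.
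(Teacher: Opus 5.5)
\textbf{Odd $p$.} Your setup is correct and matches the paper's: the reduction $a(p^r)\equiv|\mathrm{Fix}(H)|\pmod{p^r}$, the lift count giving $W_j=p^{j-1}\bigl(\dvr(j)+(p-1)\dvr(pj)\bigr)$, the exponential formula, and the case $r=1$. But the proposal stops where the real content begins: you never show that the non-identity terms vanish modulo $p^r$. As you observe, the factor $p^{\ell-c(\pi)}$ alone is not enough, since a single block transposition has weight of valuation $1$.

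The missing idea is that the number of block permutations of a given cycle type carries the factor $M=p^{r-1}$ (your $\ell$). Take a cycle type with $L$ moved blocks and $s\ge1$ nontrivial cycles, $m_j$ of them of length $j$. Its contribution is $\frac{M!}{(M-L)!}W_1^{M-L}\prod_{j\ge2}(W_j/j)^{m_j}/m_j!$.
\begin{itemize}
\item $\vp_p\bigl(M!/(M-L)!\bigr)=(r-1)+\vp_p((L-1)!)$, because $M=p^{r-1}$ and $L\le M$.
\item For odd $p$, each $W_j/j$ with $j\ge2$ has valuation at least $j-1-\vp_p(j)\ge1$.
\item $\prod m_j!$ divides $s!$, and $\vp_p(s!)\le s-1$.
\end{itemize}
So the total valuation is at least $r$. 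For odd $p$ there are therefore no ``low-order corrections'': only $W_1^M$ survives.

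Your route (a) can also be finished for odd $p$. A non-identity $\pi$ whose $C_M$-stabilizer has order $p^k$ lies in an orbit of size $p^{r-1-k}$. Its set of moved blocks is a nonempty union of orbits of a free rotation of order $p^k$, so $L\ge p^k$. Its weight has valuation at least $M-c(\pi)\ge L/2$, hence at least $(p^k+1)/2\ge k+1$, and the orbit contributes $0$ modulo $p^r$.

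\textbf{$p=2$.} Here the gap is more serious, because ``enough factors of $2$ accumulate'' is false term by term. For $r\ge3$ the same estimate removes block-cycles of length $\ge3$. Since $W_2/2=5$ is a unit, the $t!$ coming from $t$ length-$2$ cycles must be absorbed by $\vp_2((L-1)!)$. What remains is $\sum_j\frac{M!}{j!\,(M-2j)!}5^j3^{M-2j}$, whose $j$th term has valuation exactly $(r-1)+j-1-\vp_2(j)$. So the $j=1$ and $j=2$ terms are each $2^{r-1}$ times an odd number, and neither vanishes modulo $2^r$; only their sum does.

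This cancellation, together with $3^{2^{r-1}}\equiv1\pmod{2^{r+1}}$ (lifting the exponent), is what gives $a(2^r)\equiv1\pmod{2^r}$, and it is absent from your plan. Route (a) does not avoid it either. The block transpositions $(i,\,i+M/2)$ form one $C_M$-orbit of size $2^{r-2}$ with weight $10\cdot3^{M-2}$ of valuation $1$, so that orbit alone is nonzero modulo $2^r$. As written, your argument establishes only $r=1$, plus $a(4)$ by direct computation.
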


\begin{proof}
The proof occupies the rest of this section, split into a common Setup and then the odd-prime
and power-of-two cases.

\subsection{Setup}

The plan follows Section~\ref{sec:product}. The proof uses a cyclic group action. Objects lying in full-size orbits contribute multiples of the relevant prime power and therefore vanish modulo that prime power. The congruence is therefore reduced to counting the objects fixed by the subgroup $H$. Counting those fixed objects gives a generating function, and we read
off the coefficient we need by bounding a $p$-adic valuation.

Fix a prime power $N=p^r$ and let $X_N$ be the set of colored permutations on
$N$ labels, so $|X_N|=a(N)$. Let $C_N$ act on the $N$ labels by a full cyclic
rotation, and let $H$ be the unique subgroup of $C_N$ of order $p$. The orbits
of $H$ on the labels partition the labels into $M=p^{r-1}$ blocks of size $p$.

Every nontrivial subgroup of the cyclic group $C_N$ contains $H$. Therefore any
object with nontrivial $C_N$-stabilizer is already fixed by $H$, while any
object not fixed by $H$ has trivial $C_N$-stabilizer and, by the
orbit--stabilizer theorem~\eqref{eq:orbstab}, a full $C_N$-orbit of size
$N=p^r$. Each full orbit contributes $0\pmod{p^r}$ to $|X_N|=a(N)$, so only the
objects fixed by $H$ matter modulo $p^r$. Writing $X_N^H$ for the set of those
objects,
\begin{equation}\label{eq:fixedcount}
a(p^r)\equiv |X_N^{H}|\pmod{p^r}.
\end{equation}

We count the objects in $X_N^H$ through the permutation they induce on the
blocks. An object fixed by $H$ sends whole blocks to whole blocks. Indeed, if
\(y=hx\) lies in the same \(H\)-orbit as \(x\), then
\[
\sigma(y)=\sigma(hx)=h\sigma(x),
\]
because the object is fixed by \(H\). Thus \(\sigma(x)\) and \(\sigma(y)\) lie
in the same \(H\)-orbit, so \(\sigma\) sends each \(H\)-block onto an
\(H\)-block. It therefore induces a permutation of the $M$ blocks, and the
cycles of that permutation are what we call \emph{block-cycles}. A block-cycle
has length $\ell$ if it contains $\ell$ of the $H$-blocks.

Fix a block-cycle of length $\ell$. An object fixed by $H$ gives rise to this
block-cycle in one of two ways, according to the total $H$-shift accumulated on
going around it once. Concretely, identify each $H$-orbit of labels with
$\mathbb{Z}/p\mathbb{Z}$. Then each step from one block to the next in the
block-cycle carries an associated shift in $\mathbb{Z}/p\mathbb{Z}$, and the
total $H$-shift is the sum of these shifts around the block-cycle.
\begin{itemize}
\item \textit{Total shift zero}: The labels above the block-cycle form $p$
cycles of length $\ell$, permuted cyclically by $H$. For the coloring to be
fixed by $H$, all $p$ cycles must share one color, which gives $\dvr(\ell)$
choices. There are $p^{\ell-1}$ shift assignments with total shift zero.
\item \textit{Total shift nonzero}: The labels above the block-cycle form a
single cycle of length $p\ell$, which gives $\dvr(p\ell)$ choices. There are
$(p-1)p^{\ell-1}$ shift assignments with nonzero total shift.
\end{itemize}
Adding the two contributions, the effective weight of a block-cycle of length
$\ell$ is
\begin{equation}\label{eq:weight}
W_\ell = p^{\ell-1}\bigl(\dvr(\ell)+(p-1)\dvr(p\ell)\bigr).
\end{equation}

We now make explicit the bijection that turns this weight into the generating function
\eqref{eq:Xfixed}. An object fixed by $H$ is uniquely determined by: (i) the permutation
$\tau$ it induces on the $M$ blocks; and (ii), independently for each block-cycle $C$ of
$\tau$, a shift assignment (one of the $p^{|C|}$ sequences of $H$-shifts described above,
recording each step's shift as we go around $C$) together with the coloring it forces (one
shared color for the $p$ length-$|C|$ cycles if the total shift is zero, or one color for the
single length-$p|C|$ cycle if the total shift is nonzero). Different block-cycles impose no
compatibility constraints on one another, since they involve disjoint sets of labels, so these
choices are independent across the cycles of $\tau$.

Consequently, for a \emph{fixed}
permutation $\tau$ of the $M$ blocks, the $H$-fixed objects inducing $\tau$ have total weight
$\prod_{\text{cycles }C\text{ of }\tau} W_{|C|}$, since summing the shift-and-coloring weight
over one block-cycle $C$ gives exactly $W_{|C|}$ by~\eqref{eq:weight}. Summing over all $M!$
permutations $\tau$ of the blocks,
\[
|X_N^{H}| = \sum_{\tau}\ \prod_{\text{cycles }C\text{ of }\tau} W_{|C|},
\]
the sum ranging over all permutations $\tau$ of the $M$ blocks (as in~\eqref{eq:combdef}).
This is the same shape as~\eqref{eq:combdef}, with the weight function $\ell\mapsto W_\ell$ in
place of $\ell\mapsto\dvr(\ell)$, so the same exponential formula used in the Introduction to
pass from~\eqref{eq:combdef} to~\eqref{eq:egf} gives
\begin{equation}\label{eq:Xfixed}
|X_N^{H}| = M!\,[x^{M}]\exp\!\left(\sum_{\ell\ge1} W_\ell\,\frac{x^\ell}{\ell}\right).
\end{equation}
In each case below we read off this coefficient modulo $p^r$. The factors
$p^{\ell-1}$ in~\eqref{eq:weight} make this possible, because longer
block-cycles carry more powers of $p$ and drop out.

\subsection{The odd prime case}

Let $p$ be odd. From~\eqref{eq:weight}, using $\dvr(1)=1$ and $\dvr(p)=2$,
\[
W_1 = \dvr(1) + (p-1)\dvr(p) = 1 + 2(p-1) = 2p-1,
\]
and for $\ell\ge2$ the factor $p^{\ell-1}$ shows that $p^{\ell-1}\mid W_\ell$.
We show that every term of~\eqref{eq:Xfixed} that uses a block-cycle of length
$\ell\ge2$ is divisible by $p^r$, so that only block fixed points survive.

We write such a term explicitly. Fix a term in the expansion of~\eqref{eq:Xfixed}, let
$m_\ell$ denote the number of block-cycles of length $\ell\ge2$ it uses, and put
\[
L=\sum_{\ell\ge2}\ell\,m_\ell,\qquad s=\sum_{\ell\ge2}m_\ell,
\]
so $s\ge1$ and $L\ge2s$. Expanding $\exp\bigl(\sum_{\ell\ge1}W_\ell x^\ell/\ell\bigr)$ as the
product over $\ell$ of the exponential series in $W_\ell x^\ell/\ell$, and extracting the
coefficient of $x^M$ (multiplied by $M!$, as in~\eqref{eq:Xfixed}), the term corresponding to
this choice of $(m_\ell)_{\ell\ge2}$ is
\begin{equation}\label{eq:term}
\frac{M!}{(M-L)!}\,W_1^{\,M-L}\prod_{\ell\ge2}\frac{(W_\ell/\ell)^{m_\ell}}{m_\ell!}.
\end{equation}
Here $M!/(M-L)!=M(M-1)\cdots(M-L+1)$ is the block-placement factor (choosing and ordering
the $L$ blocks used by the length-$\ge2$ cycles among the $M$ blocks); the remaining $(M-L)!$,
paired with $W_1^{M-L}$, accounts for the $M-L$ length-$1$ block-cycles filling the unused
blocks; each factor of $\ell$ inside $W_\ell/\ell$ is the usual cyclic-ordering denominator
already built into the exponential formula; and $\prod_{\ell\ge2}m_\ell!$ is the repeated-length
denominator, since the $m_\ell$ block-cycles of a given length $\ell$ are interchangeable.
The $p$-adic valuation of~\eqref{eq:term} is at least the sum of three quantities.
\begin{itemize}
\item \textit{Block placement}: The factor $M(M-1)\cdots(M-L+1)$ has valuation
\[
\vp_p\bigl(M(M-1)\cdots(M-L+1)\bigr) = (r-1) + \vp_p\bigl((L-1)!\bigr).
\]
Since $L\le M=p^{r-1}$, among the $L$ consecutive integers
$M, M-1, \dots, M-L+1$, only $M$ is divisible by $p^{r-1}$, and it contributes
$\vp_p(M)=r-1$. The remaining $L-1$ integers $M-1, \dots, M-L+1$ contribute the
same powers of $p$ as $1, 2, \dots, L-1$, namely $\vp_p((L-1)!)$.
\item \textit{Cycle weights}: For each $\ell\ge2$ with $m_\ell>0$, $W_\ell/\ell$ has valuation at
least $(\ell-1)-\vp_p(\ell)$, because $p^{\ell-1}\mid W_\ell$ and dividing by $\ell$ removes at
most $\vp_p(\ell)$ factors of $p$. Summed with multiplicity $m_\ell$ over all $\ell\ge2$, listing
the lengths of the $s$ cycles as $\ell_1,\dots,\ell_s$ (so each $\ell$ with $m_\ell>0$ appears
$m_\ell$ times among the $\ell_i$), these contribute at least
$\sum_{i=1}^s(\ell_i-1-\vp_p(\ell_i))$.
\item \textit{Repeated lengths}: The denominator $\prod_{\ell\ge2}m_\ell!$ divides $s!$, since
$s!/\prod_{\ell\ge2}m_\ell!$ is the multinomial coefficient counting the ways to partition $s$
labeled cycles into groups of sizes $(m_\ell)_{\ell\ge2}$ by length, hence a positive integer.
Consequently $\sum_{\ell\ge2}\vp_p(m_\ell!)\le\vp_p(s!)$, so dividing by $\prod_{\ell\ge2}m_\ell!$
costs at most $\vp_p(s!)$.
\end{itemize}
Hence the total valuation is at least
\[
(r-1) + \vp_p\bigl((L-1)!\bigr)
       + \sum_{i=1}^s\bigl(\ell_i-1-\vp_p(\ell_i)\bigr)
       - \vp_p(s!).
\]
Since $p$ is odd and each $\ell_i\ge2$, we have
\[\ell_i-1-\vp_p(\ell_i)\ge1.\]
Indeed, set \(v=\vp_p(\ell_i)\). If \(v=0\), this is immediate from
\(\ell_i\ge2\). If \(v\ge1\), then
\(\ell_i\ge p^v\ge3^v\ge v+2\), so again \(v\le\ell_i-2\).
Dropping the nonnegative term
\(\vp_p((L-1)!)\) and using
\(\sum_i(\ell_i-1-\vp_p(\ell_i))\ge s\), the valuation is at least
\[
(r-1) + s - \vp_p(s!) \ge (r-1) + 1 = r.
\]
Here $s-\vp_p(s!)\ge1$, because $s!$ is not divisible by $p^{s}$ and so
$\vp_p(s!)\le s-1$. Thus every such term vanishes modulo $p^r$.
The remaining $M-L$ block-cycles of length $1$, if any, each have weight
$W_1=2p-1$, a $p$-adic unit, and therefore contribute nonnegative valuation.

Only the term with all block-cycles of length $1$ remains, and it contributes
$W_1^{M}$. Therefore
\[
a(p^r)\equiv W_1^{M} = (2p-1)^{p^{r-1}}\pmod{p^r}.
\]
Write $2p-1=-(1-2p)$. Since $p^{r-1}$ is odd, the overall sign is negative, so
\[
(2p-1)^{p^{r-1}} = -(1-2p)^{p^{r-1}} \equiv -1 \pmod{p^r}.
\]
Here $(1-2p)^{p^{r-1}}\equiv1\pmod{p^r}$ holds because $1-2p\equiv1\pmod p$, and
the lifting-the-exponent lemma then gives
$\vp_p\bigl((1-2p)^{p^{r-1}}-1\bigr) = \vp_p(-2p) + \vp_p(p^{r-1}) = 1 + (r-1)
= r$, which is valid since $p$ is odd. This proves $a(p^r)\equiv-1\pmod{p^r}$.

\subsection{The power of two case}

Let $p=2$, $N=2^r$, and $M=2^{r-1}$. From~\eqref{eq:weight},
\[
W_\ell = 2^{\ell-1}\bigl(\dvr(\ell)+\dvr(2\ell)\bigr),
\]
so in particular, using $\dvr(1)=1$, $\dvr(2)=2$, and $\dvr(4)=3$,
\[
W_1 = \dvr(1)+\dvr(2) = 1+2 = 3,\qquad
W_2 = 2\bigl(\dvr(2)+\dvr(4)\bigr) = 2(2+3) = 10,
\]
and $2^{\ell-1}\mid W_\ell$ for $\ell\ge3$.

\begin{lemma}\label{lem:two-elimination}
Let $r\ge3$ and $M=2^{r-1}$. In the expansion of
\[
M!\,[x^{M}]\exp\!\left(\sum_{\ell\ge1} W_\ell\,\frac{x^\ell}{\ell}\right),
\]
every term involving at least one block-cycle of length $\ell\ge3$ is divisible
by $2^r$.
\end{lemma}

\begin{proof}
As in the odd case, fix a term of the expansion, let $m_\ell$ denote the number of
block-cycles of length $\ell\ge3$ it uses, let $t=m_2\ge0$ be the number of block-cycles of
length $2$, and put
\[
L=\sum_{\ell\ge3}\ell\,m_\ell+2t,\qquad s=\sum_{\ell\ge3}m_\ell\ge1.
\]
Exactly as in~\eqref{eq:term}, this term equals
\[
\frac{M!}{(M-L)!}\,W_1^{\,M-L}\,\frac{(W_2/2)^{t}}{t!}\prod_{\ell\ge3}\frac{(W_\ell/\ell)^{m_\ell}}{m_\ell!},
\]
so $L\le M$, and the block-placement factor $M(M-1)\cdots(M-L+1)$ contributes
\[
\vp_2\bigl(M(M-1)\cdots(M-L+1)\bigr)=(r-1)+\vp_2((L-1)!).
\]
Each cycle of length \(\ell\ge3\) contributes at least
\((\ell-1)-\vp_2(\ell)\ge1\) to $\vp_2(W_\ell/\ell)$ (by the same case split on $\vp_2(\ell)$ as
in the odd case, now with $2^v\ge v+2$ for $v\ge2$, and directly for $v\in\{0,1\}$), while each
cycle of length $2$ contributes \(\vp_2(W_2/2)=\vp_2(5)=0\). Summed with multiplicity, the
length-$\ge3$ cycles contribute at least $s$. The repeated-length denominator
$\prod_{\ell\ge3}m_\ell!$ divides $s!$ by the same multinomial-coefficient argument as before, so
costs at most \(\vp_2(s!)\); the length-$2$ denominator $t!$ costs \(\vp_2(t!)\). Since $s\ge1$, $L=\sum_{\ell\ge3}\ell\,m_\ell+2t\ge
3s+2t\ge2t+3$, so $L-1\ge2t$; as $\vp_2(n!)$ is nondecreasing in $n$ and $2t\ge t$, the term
\(\vp_2((L-1)!)\) is at least \(\vp_2((2t)!)\ge\vp_2(t!)\), which absorbs that cost. Therefore the
total valuation is at least
\[
(r-1)+s-\vp_2(s!)\ge r,
\]
because \(\vp_2(s!)\le s-1\).
\end{proof}

By Lemma~\ref{lem:two-elimination}, only block-cycles of lengths $1$ and $2$
remain modulo $2^r$, so
\begin{equation}\label{eq:S}
a(2^r)\equiv M!\,[x^{M}]\exp\!\left(3x + 5x^2\right) =: S_M,
\end{equation}
using $W_1/1=3$ and $W_2/2=5$. We expand the exponential. The term $5x^2$
supplies $j$ block-cycles of length $2$, occupying $2j$ blocks, while the
remaining $M-2j$ blocks are fixed points of weight $3$. Therefore
\[
S_M = \sum_{j=0}^{\lfloor M/2\rfloor}
        \frac{M!}{j!\,(M-2j)!}\,5^{j}\,3^{\,M-2j}.
\]
Write $M=2^{R}$ with $R=r-1$. We show that for $R\ge2$,
\begin{equation}\label{eq:Sclaim}
S_M\equiv 1\pmod{2^{R+1}},
\end{equation}
which is the same as $S_M\equiv1\pmod{2^{r}}$. We bound the valuation of each
term.

\textit{For $j=0$,} the term equals $3^{M}=3^{2^{R}}$. By the lifting-the-exponent
lemma, for $R\ge1$,
\[
\vp_2\bigl(3^{2^{R}}-1\bigr)
  = \vp_2(3-1)+\vp_2(3+1)+\vp_2(2^R)-1 = 1+2+R-1 = R+2,
\]
so $3^{M}-1$ is divisible by $2^{R+1}$, that is, $3^{M}\equiv1\pmod{2^{R+1}}$.

\textit{For $j\ge1$,} the factors $5^j$ and $3^{M-2j}$ are odd, so the
valuation of the $j$-th term equals the valuation of its coefficient. Using
$M=2^{R}$,
\[
\frac{M!}{(M-2j)!} = M(M-1)\cdots(M-2j+1).
\]
Here the single factor $M$ contributes $\vp_2(M)=R$. For \(1\le a<2^R\), we have
\[
\vp_2(2^R-a)=\vp_2(a).
\]
Therefore
\[
\vp_2\bigl((M-1)\cdots(M-2j+1)\bigr)=\vp_2((2j-1)!).
\]
Subtracting \(\vp_2(j!)\) and
using the identity
\[
\vp_2((2j-1)!)-\vp_2(j!) = (j-1)-\vp_2(j),
\]
which follows from Legendre's formula \(\vp_2(N!)=N-s_2(N)\) and the relation
\(s_2(2j-1)=s_2(j)+\vp_2(j)\),
\[
\vp_2\!\left(\frac{M!}{j!\,(M-2j)!}\right) = R + j - 1 - \vp_2(j).
\]
For $j\ge3$ we have $j-1-\vp_2(j)\ge1$, since $\vp_2(j)\le\log_2 j < j-1$, so
those terms are divisible by $2^{R+1}$. The only terms that need not be
individually divisible by $2^{R+1}$ are $j=1$ and $j=2$, and the condition
$R\ge2$ ensures $M-2j\ge0$, so both are present. For each of these the formula
gives exact valuation $R$, so each term equals $2^{R}$ times an odd number.
Their sum is therefore
\[
2^{R}(\text{odd}) + 2^{R}(\text{odd}) = 2^{R}(\text{odd}+\text{odd})
= 2^{R}(\text{even}),
\]
which is divisible by $2^{R+1}$.

Combining the three observations, the total contribution of all terms with
$j\ge1$ vanishes modulo $2^{R+1}$, while the term $j=0$ is congruent to $1$.
This proves~\eqref{eq:Sclaim}, and so by~\eqref{eq:S},
\[
a(2^r)\equiv1\pmod{2^r}\qquad(r\ge3).
\]
The two small cases follow directly from the sequence, since $a(2)=3\equiv1\pmod2$
and $a(4)=59\equiv3\pmod4$. This completes the proof of
Proposition~\ref{prop:primepower}.
\end{proof}

\begin{remark}
Equivalently, one may choose and pair the $2j$ blocks forming the length-$2$
cycles in $\frac{M!}{2^{j}\,j!\,(M-2j)!}$ ways and leave the remaining $M-2j$
blocks fixed. Giving each length-$2$ block-cycle the weight $W_2=10$ and each
fixed block the weight $W_1=3$ reproduces the term, since
\[\frac{M!}{2^{j}\,j!\,(M-2j)!}\,10^{j}\,3^{M-2j}
= \frac{M!}{j!\,(M-2j)!}\,5^{j}\,3^{M-2j}.\]
\end{remark}

\begin{remark}
The conditions $R\ge1$ in the lifting-the-exponent step and $R\ge2$ in the
analysis of $j=1$ and $j=2$ are necessary. At $R=0$ one has $\vp_2(3-1)=1$,
which is not $R+2$. At $R=1$, that is $M=2$, the full sum is
$S_2 = 3^2 + \tfrac{2!}{1!\,0!}\cdot 5 = 9+10 = 19\equiv3\pmod4$ rather than
$1$. These boundary cases correspond exactly to $a(2)$ and $a(4)$, which are treated separately above.
\end{remark}

\section{Completing the congruences}\label{sec:completing}

\begin{proof}[Proof of Theorem~\ref{thm:main}]
We now combine the prime-power residues of
Proposition~\ref{prop:primepower} into the value of $a(k)\bmod k$ for each class
of $k$, and then apply the product congruence. In each case the passage from
prime-power residues to a residue modulo $k$ is the Chinese remainder theorem,
as in Section~\ref{sec:reduction}. Fix $k\ge1$.

\subsection*{Case 1: \texorpdfstring{$k$}{k} odd}

The case $k=1$ is immediate, since every integer is divisible by $1$, so assume
$k>1$. For each maximal prime-power divisor $q=p^r$ of $k$, the prime $p$ is odd,
and $k/q$ is odd because it is a product of odd prime powers. By
Lemma~\ref{lem:reduction} and Proposition~\ref{prop:primepower},
\[
a(k)\equiv a(q)^{k/q}\equiv(-1)^{k/q}\equiv-1\pmod q,
\]
where the last step uses that $k/q$ is odd. This holds for every maximal
prime-power divisor of $k$, so the Chinese remainder theorem gives
$a(k)\equiv-1\pmod k$. By Proposition~\ref{prop:product},
\[
a(n+k)\equiv a(n)a(k)\equiv -a(n)\pmod k,
\]
so $a(n+k)+a(n)\equiv0\pmod k$, which is $k\mid a(n+k)+a(n)$.

\subsection*{Case 2: \texorpdfstring{$k\equiv0,2,6\pmod8$}{k = 0,2,6 mod 8}}

Let $k=2^{\alpha}m$ with $m$ odd. Since $k$ is even here, $\alpha\ge1$, and the
hypothesis $k\equiv0,2,6\pmod 8$ is equivalent to $\alpha=1$ or $\alpha\ge3$,
that is $\alpha\ne2$, since $\alpha=2$ would give $k\equiv4\pmod8$. For each odd
maximal prime-power divisor $q=p^r$ of $m$, the exponent $k/q$ is even, because
it retains the factor $2^\alpha$, so by Lemma~\ref{lem:reduction} and
Proposition~\ref{prop:primepower},
\[
a(k)\equiv a(q)^{k/q}\equiv(-1)^{k/q}\equiv1\pmod q.
\]
For the part of $k$ that is a power of $2$, there are two subcases. If
$\alpha=1$, then the $2$-component is $2$, and $a(2)\equiv1\pmod2$ by
Proposition~\ref{prop:primepower}, so by Lemma~\ref{lem:reduction},
$a(k)\equiv a(2)^{k/2}\equiv1^{k/2}\equiv1\pmod2$. If
$\alpha\ge3$, then $a(2^\alpha)\equiv1\pmod{2^\alpha}$ by
Proposition~\ref{prop:primepower}, and so, again by Lemma~\ref{lem:reduction},
$a(k)\equiv a(2^\alpha)^{k/2^\alpha}\equiv1\pmod{2^\alpha}$. In every subcase
$a(k)\equiv1$ modulo each maximal prime-power divisor of $k$, so the Chinese
remainder theorem gives $a(k)\equiv1\pmod k$. Hence
\[
a(n+k)\equiv a(n)a(k)\equiv a(n)\pmod k,
\]
which is $k\mid a(n+k)-a(n)$.

\subsection*{Case 3: \texorpdfstring{$k\equiv4\pmod8$}{k = 4 mod 8}}

Let $k=4m$ with $m$ odd, so the part of $k$ that is a power of $2$ is exactly
$4$. Modulo $4$,
\[
a(k)\equiv a(4)^{m}\equiv 3^{m}\equiv 3\pmod 4,
\]
where the last step uses that $m$ is odd and $3^2\equiv1\pmod4$, so $3^m\equiv3$.
For each odd maximal prime-power divisor $q=p^r$ of $m$, the exponent $k/q$ is
even, so as in Case~2, $a(k)\equiv1\pmod q$, and hence $a(k)\equiv1\pmod m$. We
now want the residue modulo $4m=k$ that is congruent to $3$ modulo $4$ and to
$1$ modulo $m$. That residue is
\[
a(k)\equiv 1+\tfrac{k}{2}\pmod k.
\]
To check this, note that $k/2=2m$. Then $1+2m\equiv1\pmod m$, since $m\mid 2m$,
and $1+2m\equiv1+2\equiv3\pmod4$, since $m$ is odd and so $2m\equiv2\pmod 4$. By
the Chinese remainder theorem this residue is unique modulo $k$. By
Proposition~\ref{prop:product},
\[
a(n+k)\equiv a(n)a(k)\equiv a(n)\Bigl(1+\tfrac{k}{2}\Bigr)\pmod k,
\]
so $a(n+k)-a(n)\equiv \tfrac{k}{2}\,a(n)\pmod k$. Multiplying by $2$ gives
$2\bigl(a(n+k)-a(n)\bigr)\equiv k\,a(n)\equiv0\pmod k$, which is
$k\mid2\bigl(a(n+k)-a(n)\bigr)$.

\medskip
This completes the proof of Theorem~\ref{thm:main}.
\end{proof}

\section*{Acknowledgments}

The author thanks the contributors to the OEIS entry for A028342, especially Peter Bala for formulating the congruence conjecture. The author also thanks Gabor Lippner for a helpful discussion of the proof. The author used AI tools for exploratory discussion, computational checking, and drafting assistance. The author independently checked the mathematical arguments, computations, and final manuscript. Theorem~\ref{thm:main} and all of its supporting propositions and lemmas have additionally been formally verified in the Lean~4 proof assistant using Mathlib.\footnote{Formalization available at
\url{https://github.com/ahaankallat/bala-a028342-lean}. The top-level Lean
statement corresponding to Theorem~\ref{thm:main} is
\texttt{main\_theorem} in \texttt{Bala/Complete.lean}.}

\end{document}